\ifpdf\usepackage{bookmark}\fi 
\newtheorem{lem}{Lemma}
\newtheorem{pro}{Proposition}
\theoremstyle{definition}
\newtheorem{Def}{Definition}
\theoremstyle{remark}
\newtheorem{rem}{Remark}[section]
\DeclareMathOperator{\re}{Re}
\newcommand{\txs}{\textstyle}
\newcommand{\lt}{\left}
\newcommand{\rt}{\right}
\newcommand{\ts}{\times}
\renewcommand{\le}{\leqslant}
\renewcommand{\ge}{\geqslant}
\newcommand{\hm}[1]{#1 \nolinebreak \discretionary{}{\mbox{$#1$}}{}}
\newcommand{\intl}[2]{\int\limits_{#1}^{#2}}
\newcommand{\tintl}[2]{{\txs \intl{#1}{#2} }\,}
\newcommand{\zt}{\zeta}
\newcommand{\vkp}{\varkappa}
\newcommand{\la}{\lambda}
\newcommand{\La}{\Lambda}
\newcommand{\tl}{\tilde}
\newcommand{\ol}{\overline}
\newcommand{\RR}{\mathds{R}}
\newcommand{\NN}{\mathds{N}}
\newcommand{\CC}{\mathds{C}}
\newcommand{\bLm}{\bar\Lambda^m}
\begin{document}

\title{Uniform exponential-power estimate\\ for the solution to a family of the Cauchy problems\\ for linear differential equations}

\author{Evgeny E. Bukzhalev\thanks{E-mail: bukzhalev@mail.ru}\\ \em \small M.~V.~Lomonosov Moscow State University, Moscow, Russia} 

\maketitle

\begin{abstract}

We consider a solution to a parametric family of the Cauchy problems for $m$th-order
linear differential equations with constant coefficients. Parameters of the family are the coefficients of the differential equation and the initial values of the solution and its derivatives up to the $(m-1)$th-order (by a~solution to a family of problems we mean a function of the parameters of the given family that maps each tuple of parameters to a solution to the problem with these parameters). We obtain an exponential-power estimate for the functions of this parametric family that is uniform (with respect to parameters) on any bounded set. We also prove that the maximal element of the set of real parts of monic polynomial roots is a~continuous function (of the coefficients of the polynomial). The continuity of this element is used for obtaining the estimate mentioned above (since to each tuple of coefficients of the differential equation there corresponds its characteristic polynomial with these coefficients, the set of the roots of the characteristic polynomial and the maximal element of this set are also functions of the coefficients of the differential equation).

\textbf{Keywords:} families of linear differential equations, estimates for solutions of differential equations, initial value problem for ordinary differential equation, estimates for roots of polynomials, Routh--Hurwitz stability criterion.

\end{abstract}

\section{Introduction}

We obtain a uniform (with respect to parameters $M_m \in \CC^m$ and $N_m \in \CC^m$) exponential-power estimate for the solution $w(\cdot; M_m, N_m): [0, +\infty) \to \CC$ to the Cauchy problem for a linear differential equation of an arbitrary order $m \in \NN$ with constant coefficients $M_m$ and initial values $N_m$ considered as parameters for $w$:
\begin{gather}\label{de wn}
  w^{(m)}(\xi; M_m, N_m) = a_{m-1}\, w^{(m-1)}(\xi; M_m, N_m) + \cdots + a_0\, w(\xi; M_m, N_m), \quad \xi \in (0,+\infty);
\\\label{ic wn}
  w(0;M_m,N_m) = w^0,\ \ldots,\ w^{(m-1)}(0;M_m,N_m) = w^{m-1},
\end{gather}
where $M_m = (a_0, \dots, a_{m-1}) \in \CC^m$, $N_m = (w^0, \dots, w^{m-1}) \in \CC^m$, $w^{(i)}$ is the $i$th derivative of the function $w(\cdot; M_m, N_m)$ (i.e., the $i$th derivative of the function $w$ with respect to the first argument). We also estimate the first $(m-1)$ derivatives of the function $w(\cdot; M_m, N_m)$ (recall that the Cauchy problem for equation \eqref{de wn} has a unique solution for any $a_i$ and $w^i$). But
 since in view of equation \eqref{de wn} the derivatives of $w(\cdot; M_m, N_m)$
 of order $n\ge m$  is a~linear combination of lower-order derivatives,
it follows that in fact the uniform exponential-power estimates hold for an arbitrary-order derivative of $w(\cdot; M_m, N_m)$.

\begin{Def}
Let $\mathfrak K$ be the map that to each double $(M_m,N_m) \in {(\CC^m)}^2 \hm=: D_{\mathfrak K}$ assigns
problem \eqref{de wn}--\eqref{ic wn}. The map $\mathfrak K$ is called the family of the Cauchy problems for the $m$th-order differential equation \textup(with constant coefficiants\textup), and the components $M_m$ and $N_m$ of double $(M_m,N_m) \in D_{\mathfrak K}$ are called the parameters of the family $\mathfrak K$.
\end{Def}

\begin{rem}
  The restriction of the map $\mathfrak K$ to a set $\mathcal M \subseteq D_{\mathfrak K}$ is often called subfamily of the family $\mathfrak K$ and is denoted by $\{ \mathfrak K(M_m, N_m) \}_{(M_m,N_m) \in \mathcal M}$.
\end{rem}

\begin{Def}
  Let $W$ be the map that takes each double $(M_m,N_m) \in D_{\mathfrak K}$ to the solution to the problem $\mathfrak K(M_m,N_m)$ (i.g., the solution to problem \eqref{de wn}--\eqref{ic wn}). The map $W$ is said to be the solution to the family $\mathfrak K$.
\end{Def}

\begin{rem}
  The value $W(M_m,N_m)$ of the map $W$ is a function of one real variable. This function is the solution to problem \eqref{de wn}--\eqref{ic wn}. And since we already denoted the solution to this problem by $w(\cdot; M_m, N_m)$, there is the following relation between $W$ and $w$: $W(M_m,N_m)(\xi) = w(\xi; M_m, N_m)$ (strictly speaking, just this relation defines $w$).
\end{rem}

Let $\mathfrak E$ be the map that to each single $M_m \in \CC^m$ assigns the differential equation of the family $\mathfrak K(M_m,\cdot) := \{ \mathfrak K(M,N_m) \}_{(M,N_m) \in \{M_m\} \ts \CC^m}$ (see \eqref{de wn}). It is known (see, e.g., \cite{Tikhonov_Vasil'eva_Sveshnikov_1985_book}) that the structure of the solution $W(M_m,N_m)$ to the problem $\mathfrak K(M_m,N_m)$ depends on the distribution of the multiplicity of the roots of the characteristic polynomial $\mathfrak P(M_m)$ (see \eqref{PM}) for the equation $\mathfrak E(M_m)$ (here $M_m$ is a tuple of coefficients $a_i$ of both the equation and its characteristic polynomial). Therefore, the explicit formula for $w^{(i)}(\xi; M_m, N_m)$ can hardly be used to obtain uniform estimates for the functions $w^{(i)}(\cdot; M_m, N_m)$ with respect to  $M_m$ and $N_m$ (the exceptions are the cases of $m=1$ and $m=2$ (see \cite{Bukzhalev_2017_CMMP, Bukzhalev_2017_MUCMC}, where these estimates are obtained for the case of a~second-order differential equation ($m=2$) with constant real coefficients)).

In the sequel we need one uniform (with respect to the coefficients $a_i$) estimate for $\bLm$,
which is the greatest of the real parts of the roots of the characteristic polynomial for the equation $\mathfrak E(M_m)$.
Its derivation is based on the continuity of $\bLm$ (as a function of $M_m$) and we begin our paper with a proof of this continuity.

\section{Proof of the continuity of \boldmath \texorpdfstring{$\bLm$}
{\unichar{"035E}\unichar{"1D27}\unichar{"1D50}}}

Let $\mathfrak P$ be the map that to each $M_m = (a_0, \dots, a_{m-1}) \in \CC^m$ assigns the characteristic polynomial for the equation $\mathfrak E(M_m)$ (see \eqref{de wn}):
\begin{gather}\label{PM}
  \mathfrak P(M_m) := \la^m - a_{m-1}\, \la^{m-1} - \cdots - a_1\, \la - a_0.
\end{gather}
The map $P$ is also called a~family of polynomials. Since the degree of the polynomial $\mathfrak P(M_m)$ is $m$ for all $M_m \in \CC^m$, there exist functions $\la^1$, \dots, $\la^m$: $\CC^m \to \CC$ such that
\begin{gather}\label{PME}
  \forall M_m \in \CC^m\ \mathfrak P(M_m) = (\la - \la^1(M_m)) \ldots (\la - \la^m(M_m)).
\end{gather}
The numbers $\la^1(M_m)$, \dots, $\la^m(M_m)$ are called roots of the polynomial $\mathfrak P(M_m)$. A tuple $(\la^1, \dots,$ $\la^m)$ of the functions $\la^i$: $\CC^m \to \CC$ that satisfy condition \eqref{PME} is called a full tuple of roots of the family $\mathfrak P$. Further, by the continuity (on set $\mathds M$) of a tuple $(\la^1, \dots, \la^m)$ we mean the continuity (on $\mathds M$) of each its component.

Denote by $\mathcal L$ the set of all maps $\CC^m \to \CC^m$, $M \mapsto (\la^1(M), \dots, \la^m(M))$,
such that $(\la^1, \dots,$ $\la^m)$ is a~full tuple of roots of the family $\mathfrak P$.
There are infinitely many full tuples of roots, since
for each $M_m \in \CC^m$ there are various ways to label the roots of the polynomial $\mathfrak P(M_m)$
(we assume that each root is repeated as many times as its multiplicity).
It is easy to verify that for $m \ge 2$ the set $\mathcal L$ contains no maps continuous in the whole space~$\CC^m$. Moreover, for $m \ge 4$ the set $\mathcal L$ contains no maps with continuous restriction to $\RR^m$ (the proof can be found in \cite{2017arXiv171000640B}). However, it is known that for each $m$ and any point~$M_0 \in \CC^m$, there exists a map $\La_{M_0} \in \mathcal L$ continuous at this point (see, e.g., \cite{Ostrowski_1966_book}).

\begin{rem}
  Each map $\La_{M_0}$ is continuous at the corresponding point $M_0$. I.g., first we fix point $M_0$ and then choose
  the map $\La_{M_0}$ continuous at this point (the map $\La_{M_0}$ can be discontinuous at other points).
  If we change the point then, generally speaking, we will have to change the map. Each point $M_0$ has its nonempty set $\mathfrak S_{M_0}$ of full tuples of roots of the family $\mathfrak P$ continuous at this point and the intersection of the family of sets $\{ \mathfrak S_{M_0} \}_{M_0 \in \CC^m}$ for $m \ge 2$ is empty.
\end{rem}

\begin{lem}\label{lem}
  Let $\mathcal L \ni \La: M \mapsto (\la^1(M), \dots, \la^m(M))$. Then
  \begin{gather}\label{La}
    \bLm: \CC^m \to \RR,\ M_m \mapsto \max \{\re\la^1(M_m), \dots, \re\la^m(M_m)\}
  \end{gather}
  is a continuous function on $\CC^m$.
\end{lem}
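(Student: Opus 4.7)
The plan is to exploit the fact that although the given map $\La \in \mathcal L$ may be discontinuous at many points, the quantity $\max\{\re\la^1(M_m),\dots,\re\la^m(M_m)\}$ is a symmetric function of the roots, and hence depends only on the \emph{unordered} multiset of roots of $\mathfrak P(M_m)$ — not on which labelling map from $\mathcal L$ we use to pick them out. The assumption on $\La$ is therefore a red herring: we may freely replace $\La$ by a locally continuous representative at each point.

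More precisely, I would argue as follows. Fix an arbitrary $M_0 \in \CC^m$ and invoke the result cited from \cite{Ostrowski_1966_book}: there exists a map $\La_{M_0} \in \mathcal L$, $M \mapsto (\wt\la^1(M), \dots, \wt\la^m(M))$, continuous at $M_0$. By the definition of $\mathcal L$ (see \eqref{PME}), for every $M \in \CC^m$ the multisets $\{\la^1(M),\dots,\la^m(M)\}$ and $\{\wt\la^1(M),\dots,\wt\la^m(M)\}$ both coincide with the multiset of roots of the polynomial $\mathfrak P(M)$. Consequently
\begin{gather*}
  \bLm(M) = \max\{\re\la^1(M),\dots,\re\la^m(M)\} = \max\{\re\wt\la^1(M),\dots,\re\wt\la^m(M)\} \quad \forall M \in \CC^m.
\end{gather*}

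The right-hand side is the maximum of $m$ real-valued functions each of which, being the real part of a component of $\La_{M_0}$, is continuous at $M_0$. The maximum of finitely many functions continuous at a point is itself continuous at that point, so $\bLm$ is continuous at $M_0$. Since $M_0 \in \CC^m$ was arbitrary, $\bLm$ is continuous on the whole of $\CC^m$.

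The only delicate step is the symmetry observation: one must be sure that the replacement of $(\la^1,\dots,\la^m)$ by $(\wt\la^1,\dots,\wt\la^m)$ really preserves $\bLm(M)$ pointwise, which follows because both tuples list the same roots of $\mathfrak P(M)$ (with multiplicities) and because $\max$ is permutation-invariant. Everything else is a soft consequence of this observation together with the local-continuity theorem for roots of polynomials, so I do not expect any genuine obstacle beyond stating it cleanly.
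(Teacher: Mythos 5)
Your proposal is correct and follows essentially the same route as the paper: fix $M_0$, pass to a full tuple of roots continuous at $M_0$ (Ostrowski), use the permutation-invariance of $\max$ to see that $\bLm$ is unchanged by the relabelling, and conclude from the continuity of the maximum of finitely many continuous functions. The symmetry observation you flag as the delicate step is exactly what the paper isolates in its remark following the lemma, so nothing is missing.
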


\begin{rem}
For each point $M_m \in \CC^m$ the unordered set of roots of the polynomial $\mathfrak P(M_m)$ and the value $\bLm(M_m)$ are independent of the choice of
$\La\in \mathcal L$. Thus, to each $\La \in \mathcal L$ (i.e., to each way of numbering of roots of the family $\mathfrak P$) there corresponds
the same function~$\bLm$.
\end{rem}

\begin{proof}[Proof of Lemma \ref{lem}]
We fix an arbitrary point $M_0 \in \CC^m$ and choose a map $\La_{M_0} \in \mathcal L$ which is
continuous at this point. Let $\La_{M_0}: M \mapsto (\la^1_{M_0}(M), \dots, \la^m_{M_0}(M))$. Then each of the functions $\la^i_{M_0}$ is also continuous at the point $M_0$. But the continuity of $\la^i_{M_0}$ implies that of $\re \la^i_{M_0}$, whereas the continuity of all $\re \la^i_{M_0}$ implies the continuity of the maximum of these functions.
\end{proof}

\section{Obtaining exponential-power estimates}

We put $\Pi_m(C) := \{ (x_1, \dots, x_m) \in \CC^m : |x_1| \le C, \dots, |x_m| \le C \}$.

\begin{pro}\label{pro}

Let $C_a \ge 0$, $C_w \ge 0$. Then there exists $\tl C_m \ge 0$ such that
\begin{gather}\label{est w}
  \lt| w^{(i)}(\xi;M_m,N_m) \rt| \le \tl C_m\, (1 + \xi^{m-1})\, e^{\bLm(M_m)\, \xi}
\end{gather}
for all $(i,\xi,M_m,N_m) \in \{ 0, \dots, m-1 \} \ts [0, +\infty) \ts \Pi_m(C_a) \ts \Pi_m(C_w)$, where $w(\cdot; M_m, N_m)$ is the solution to problem \eqref{de wn}--\eqref{ic wn}, $\bLm$ is the function from Lemma \ref{lem}.

\end{pro}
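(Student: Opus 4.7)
The plan is to reformulate the Cauchy problem as a first-order linear system for the vector $y(\xi) := (w(\xi;M_m,N_m), w'(\xi;M_m,N_m), \ldots, w^{(m-1)}(\xi;M_m,N_m))^{\top}$, namely $y'(\xi) = A(M_m)\, y(\xi)$ with $y(0) = N_m$, where $A(M_m)$ is the companion matrix of $\mathfrak P(M_m)$. Then $y(\xi) = e^{A(M_m)\xi} N_m$, and since every component of $N_m$ has modulus at most $C_w$ on $\Pi_m(C_w)$, the proposition reduces to the matrix-exponential bound
\begin{gather*}
  \bigl\| e^{A(M_m)\xi} \bigr\| \le C_m\,(1 + \xi^{m-1})\, e^{\bLm(M_m)\,\xi}
\end{gather*}
with $C_m$ depending only on $m$ and $C_a$.

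To prove this I would use the Lagrange--Sylvester (Hermite--Newton) representation of $e^{A(M_m)\xi}$: for any labelling $(\la^1(M_m), \ldots, \la^m(M_m))$ of the roots of $\mathfrak P(M_m)$ (counted with multiplicity),
\begin{gather*}
  e^{A(M_m)\xi} = \sum_{k=0}^{m-1} f_\xi\bigl[\la^1,\ldots,\la^{k+1}\bigr]\, \prod_{j=1}^{k}\bigl(A(M_m) - \la^j\, I\bigr), \qquad f_\xi(z) := e^{z\xi},
\end{gather*}
where $f_\xi[\cdot]$ is the confluent divided difference. The Hermite--Genocchi integral representation
\begin{gather*}
  f_\xi\bigl[\la^1,\ldots,\la^{k+1}\bigr] = \xi^k \int_{\Sigma_k} \exp\!\lt(\xi \sum_{i=0}^{k} t_i\,\la^{i+1}\rt) dt,
\end{gather*}
over the standard $k$-simplex $\Sigma_k$ of volume $1/k!$ then gives, using $\sum t_i\, \re \la^{i+1} \le \bLm(M_m)$ on $\Sigma_k$, the bound $|f_\xi[\la^1,\ldots,\la^{k+1}]| \le \xi^k e^{\bLm(M_m)\xi}/k!$.

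It remains to control the matrix products uniformly. The elementary estimates $\|A(M_m)\| \le c_1(m, C_a)$ (companion-matrix entries are $0$, $1$, or~$a_i$) and $|\la^j(M_m)| \le 1 + C_a$ (Cauchy's root bound for $\mathfrak P(M_m)$) give $\|\prod_{j=1}^{k}(A(M_m) - \la^j I)\| \le c_2(m, C_a)^{\,k}$ uniformly on $\Pi_m(C_a)$; summing the $m$ terms of the Newton expansion and using $\sum_{k=0}^{m-1} (c_2 \xi)^k/k! \le e^{c_2}(1 + \xi^{m-1})$ completes the estimate. The main obstacle one might anticipate --- the discontinuous jumps in the Jordan structure of $A(M_m)$ as eigenvalues coalesce --- is neatly sidestepped here: divided differences are continuous symmetric functions of their nodes and extend seamlessly across coinciding arguments via the Hermite--Genocchi formula, so no case analysis on the multiplicity pattern of $\mathfrak P(M_m)$ is required, and the continuity of $\bLm$ from Lemma~\ref{lem} enters only through the fact that the bounding function on the right-hand side depends continuously on the parameters.
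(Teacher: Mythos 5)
Your proof is correct, but it takes a genuinely different route from the paper. The paper argues by induction on the order $m$: it peels off one root $\la^*(M_{m+1})$ with maximal real part via the substitution $w = e^{\la^*\xi}u$, observes that the resulting equation for $u$ has zero constant term (so $v:=u'$ solves an $m$th-order equation whose characteristic roots all have nonpositive real part), applies the induction hypothesis to $v$ to get a purely power bound $\tl C_m(1+\xi^{m-1})$, recovers $u$ by integration and $w^{(i)}$ by the Leibniz rule. Your argument is non-inductive: you pass to the companion system $y'=A(M_m)y$ and bound $\|e^{A(M_m)\xi}\|$ directly through the Newton divided-difference form of the Hermite interpolant of $e^{z\xi}$ at the roots, with the Hermite--Genocchi integral supplying exactly the factor $\xi^k e^{\bLm(M_m)\xi}/k!$ and Cauchy's root bound $|\la^j|\le 1+C_a$ (the same bound the paper invokes as its inequality \eqref{la}) controlling the matrix products. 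Both proofs confront the same central difficulty --- the explicit solution formula changes form discontinuously as roots coalesce --- and both sidestep it without a case analysis: the paper by never writing the solution explicitly and only ever extracting one root at a time, you by exploiting that confluent divided differences are continuous symmetric functions of the nodes. What the paper's route buys is elementarity (only integration and the Leibniz formula, at the cost of the bookkeeping of two changes of variable per induction step); what yours buys is a one-shot, explicit constant $\tl C_m$ and a transparent reason for the uniformity in $(M_m,N_m)$. Two small points worth making explicit if you write this up: the identity $f(A)=\sum_{k}f_\xi[\la^1,\dots,\la^{k+1}]\prod_{j\le k}(A-\la^j I)$ requires the interpolation nodes to carry the multiplicities of the characteristic (or at least the minimal) polynomial, which your labelling does; and the validity of the Newton form for an \emph{arbitrary} ordering of repeated nodes should be justified by the symmetry and continuity of the Hermite--Genocchi divided differences together with a density argument over configurations with distinct roots. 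Like the paper, you correctly observe that Lemma \ref{lem} is not actually needed inside this proposition; it is used only afterwards, to pass from \eqref{est w} to the decay estimate \eqref{est_w} via Weierstrass's theorem.
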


\begin{proof}

We use induction on $m$. We denote by $S_m$ the conclusion of Proposition~\ref{pro}.
Since $S_1$ is clearly true, it remains to verify, that $S_{m+1}$ follows from $S_m$ implies for any $m \in \NN$.

Consider the Cauchy problem for the $(m+1)$th-order equation:
\begin{gather}\label{de wn+1}
  w^{(m+1)}(\xi; M_m, N_m) = a_m\, w^{(m)}(\xi; M_m, N_m) + \cdots + a_0\, w(\xi; M_m, N_m), \quad \xi \in (0,+\infty);
\\\label{ic wn+1}
  w(0;M_{m+1},N_{m+1}) = w^0,\ \ldots,\ w^{(m)}(0;M_{m+1},N_{m+1}) = w^m
\end{gather}
and fix arbitrary $C_a \ge 0$ and $C_w \ge 0$. The statement of $S_{m+1}$ is as follows: there exists $\tl C_{m+1}$ such that
\begin{gather*}
  \lt| w^{(i)}(\xi;M_{m+1},N_{m+1}) \rt| \le \tl C_{m+1}\, (1 + \xi^m)\, e^{\bar \La_{m+1}(M_{m+1})\, \xi}
\end{gather*}
for all $(i,\xi,M_{m+1},N_{m+1}) \in \ol{0,m} \ts [0, +\infty) \ts \Pi_{m+1}(C_a) \ts \Pi_{m+1}(C_w)$, where $w(\cdot; M_{m+1}, N_{m+1})$ is the solution to problem \eqref{de wn+1}--\eqref{ic wn+1}.

To verify the validity of $S_{m+1}$ (assuming that  $S_m$ is true) we change the variable in problem \eqref{de wn+1}--\eqref{ic wn+1}:
\begin{gather}\label{u}
  w(\xi;M_{m+1},N_{m+1}) = e^{\la^*(M_{m+1})\, \xi}\, u(\xi;M_{m+1},N_{m+1}),
\end{gather}
where $\la^*$ is a function that takes each tuple $M_{m+1} = (a_0, \dots, a_m) \in \CC^{m+1}$ to an arbitrary root $\la_i(M_{m+1})$ of the characteristic polynomial for equation \eqref{de wn+1} whose real part $\re \la_i(M_{m+1})$ coincides with $\bar \La_{m+1}(M_{m+1})$:
\begin{gather}\label{re la*}
  \re \la^*(M_{m+1}) = \bar \La_{m+1}(M_{m+1}).
\end{gather}

For the new function $u(\cdot; M_{m+1}, N_{m+1}): [0, +\infty) \to \CC$ we obtain the following initial-value problem:
\begin{gather}\notag
  u^{(m+1)}(\xi;M_{m+1},N_{m+1}) = b_m(M_{m+1})\, u^{(m)}(\xi;M_{m+1},N_{m+1}) + \cdots +{}
\\\label{de u}
  {}+ b_1(M_{m+1})\, u'(\xi;M_{m+1},N_{m+1}), \quad \xi \in (0,+\infty);
\\\label{ic u}
  u(0;M_{m+1},N_{m+1}) = u^0(M_{m+1},N_{m+1}),\ \ldots,\ u^{(m)}(0;M_{m+1},N_{m+1}) = u^m(M_{m+1},N_{m+1}).
\end{gather}
Here $b_i(M_{m+1}) = \tl b_i(\la^*(M_{m+1}), M_{m+1})$, $u^i(M_{m+1},N_{m+1}) = \tl u^i(\la^*(M_{m+1}), N_{m+1})$, where, in its turn, $M_{m+1} = (a_0, \dots, a_m)$ and $N_{m+1} = (w^0, \dots, w^m)$, $\tl b_i$ and $\tl u^i$ are the known
functions of $m+2$ arguments (polynomial with respect to the first argument and linear with respect to other $m+1$ arguments). In equation \eqref{de u}, we already took into account that its characteristic polynomial has the zero as a~root
for $M_{m+1} \in \CC^{m+1}$ (see \eqref{mu}), so the coefficient $b_0(M_{m+1})$ of the $u(\xi;M_{m+1},N_{m+1})$ vanishes identically.

Due to \eqref{u}, for each $M_{m+1} \in \CC^{m+1}$ the roots of the characteristic polynomial for equation \eqref{de u} are as follows:
\begin{gather}\label{mu}
  \mu_i(M_{m+1}) := \la_i(M_{m+1}) - \la^*(M_{m+1}), \quad i \in \{1, \dots, m+1\}.
\end{gather}
This and the definition of $\la^*(M_{m+1})$ yield
\begin{gather}\label{Re mu}
  \re \mu_i(M_{m+1}) \le 0
\end{gather}
for all $(i,M_{m+1}) \in \{1, \dots, m+1\} \ts \CC^{m+1}$.

Since we assume that the points $M_{m+1} = (a_0, \dots, a_m)$ lie in the finite parallelepiped $P_{m+1}(C_a)$, all roots $\la_i(M_{m+1})$ of the characteristic polynomial for equation \eqref{de wn+1} satisfy the inequality (see, e.g., \cite{Markushevich_2005_book}):
\begin{gather}\label{la}
  |\la_i(M_{m+1})| \le 1 + C_a.
\end{gather}
So there exist constants $C_b \ge 0$ and $C_u \ge 0$ such that
\begin{gather}\label{bi ui}
  |b_i(M_{m+1})| \le C_b, \quad |u^i(M_{m+1},N_{m+1})| \le C_u
\end{gather}
for all $(i,M_{m+1},N_{m+1}) \in \ol{0,m} \ts \Pi_{m+1}(C_a) \ts \Pi_{m+1}(C_w)$.

We reduce the order of equation \eqref{de u} by one more change of variable:
\begin{gather}\label{v}
  u'(\xi;M_{m+1},N_{m+1}) = v(\xi;M_{m+1},N_{m+1}).
\end{gather}

The function $v(\cdot; M_{m+1}, N_{m+1}): [0, +\infty) \to \CC$ satisfies the following initial-value problem:
\begin{gather}\notag
  v^{(m)}(\xi;M_{m+1},N_{m+1}) = b_m(M_{m+1})\, v^{(m-1)}(\xi;M_{m+1},N_{m+1}) + \cdots +{}
\\\label{de v}
  {}+ b_1(M_{m+1})\, v(\xi;M_{m+1},N_{m+1}), \quad \xi \in (0,+\infty);
\\\notag
  v(0;M_{m+1},N_{m+1}) = u^1(M_{m+1},N_{m+1}),\ \ldots,\ v^{(m-1)}(0;M_{m+1},N_{m+1}) = u^m(M_{m+1},N_{m+1}).
\end{gather}

Let $\nu_1(M_{m+1})$, \dots, $\nu_m(M_{m+1})$ be the roots of the characteristic polynomial for equation \eqref{de v}. Since each
$\nu_i(M_{m+1})$ is at the same time a root of the characteristic polynomial for equation \eqref{de u}, they satisfy the same inequality as $\mu_i(M_{m+1})$ (see \eqref{Re mu}) for all $M_{m+1} \in \CC^{m+1}$:
\begin{gather}\label{Re nu}
  \re \nu_i(M_{m+1}) \le 0.
\end{gather}

Note also that (see \eqref{bi ui})
\begin{gather*}
  M_m = (b_1(M_{m+1}), \dots, b_m(M_{m+1})) \in \Pi_m(C_b),
\\
  N_m = (u^1(M_{m+1},N_{m+1}), \dots, u^m(M_{m+1},N_{m+1})) \in \Pi_m(C_u)
\end{gather*}
for all $M_{m+1} \in \Pi_{m+1}(C_a)$ and $N_{m+1} \in \Pi_{m+1}(C_w)$. The last estimates allow one to apply the induction hypothesis to the function $v$: there exists $\tl C_m \ge 0$ such that (see \eqref{est w}, \eqref{La} and \eqref{Re nu})
\begin{gather}\label{est v}
  \lt| v^{(i)}(\xi;M_{m+1},N_{m+1}) \rt| \le \tl C_m\, (1 + \xi^{m-1})
\end{gather}
for all $(i,\xi,M_{m+1},N_{m+1}) \in \{ 0, \dots, m-1 \} \ts [0, +\infty) \ts \Pi_{m+1}(C_a) \ts \Pi_{m+1}(C_w)$.

From \eqref{v} and \eqref{est v} we immediately obtain
\begin{gather}\label{est du}
  \lt| u^{(i)}(\xi;M_{m+1},N_{m+1}) \rt| = \lt| v^{(i-1)}(\xi;M_{m+1},N_{m+1}) \rt| \le \tl C_m\, (1 + \xi^{m-1})
\end{gather}
for the first $m$ derivatives of the function $u(\cdot; M_{m+1}, N_{m+1})$
(here and until the end of the proof we assume that $(\xi,M_{m+1},N_{m+1}) \in [0, +\infty) \hm\ts \Pi_{m+1}(C_a) \ts \Pi_{m+1}(C_w)$).

To estimate the function $u$ itself, we integrate \eqref{v} and then use \eqref{ic u}, \eqref{bi ui}, and \eqref{est v} and
employ the monotonicity property and the estimate of the absolute value of the definite integral:
\begin{gather}\notag
  \Big| u(\xi;M_{m+1},N_{m+1}) \Big| = \Big| u(0;M_{m+1},N_{m+1}) + \tintl0\xi\, v(\zt;M_{m+1},N_{m+1})\, d\zt \Big| \le \Big| u^0(M_{m+1},N_{m+1}) \Big| +{}
\\\label{est u}
  {}+ \tintl0\xi\, \Big| v(\zt;M_{m+1},N_{m+1}) \Big|\, d\zt \le C_u + \tintl0\xi\, \tl C_m\, (1 + \xi^{m-1})\, d\zt \le \tl C_u\, (1 + \xi^m)
\end{gather}
for sufficiently large $\tl C_u$.

We turn back to $w$. From \eqref{u}, \eqref{est u}, \eqref{est du}, \eqref{la}, \eqref{re la*} and
the Leibniz formula (for the $i$th-order derivative of the product of two functions) for each $i \in \ol{0,m}$ and sufficiently large $\tl C_{m+1}$ we have
\begin{gather*}
  \lt| w^{(i)}(\xi;M_{m+1},N_{m+1}) \rt| \le \sum_{j=0}^i \frac{i!}{j!(i-j)!}\, \big| u^{(j)}(\xi;M_{m+1},N_{m+1}) \big|\, {\big| \la^*(M_{m+1}) \big|}^{i-j}\, \big| e^{\la^*(M_{m+1})\, \xi} \big| \le
\\
  \le \Big[ \tl C_u\, (1 + \xi^m)\, {(1 + C_a)}^i + \sum_{j=1}^i \frac{i!}{j!(i-j)!}\, \tl C_m\, (1 + \xi^{m-1})\, {(1 + C_a)}^{i-j} \Big]\, e^{\re \la^*(M_{m+1})\, \xi} \le
\\
  \le \tl C_{m+1}\, (1 + \xi^m)\, e^{\bar \La_{m+1}(M_{m+1})\, \xi}.
\end{gather*}

\end{proof}

\begin{rem}
  Since due to equation \eqref{de wn}, the derivative of the function $w(\cdot; M_m, N_m)$ of the order $i \ge m$ is a linear combination of its lower derivatives, by induction on $i$ one can easily verify that for any nonnegative integer $i$ there exists $\tl C^i_m \ge 0$ such that
  \begin{gather*}
    \lt| w^{(i)}(\xi;M_m,N_m) \rt| \le \tl C^i_m\, (1 + \xi^{m-1})\, e^{\bLm(M_m)\, \xi}
  \end{gather*}
  for all $(\xi,M_m,N_m) \in [0, +\infty) \ts \Pi_m(C_a) \ts \Pi_m(C_w)$.
\end{rem}

\begin{rem}
  If we replace the rectangles $\Pi_m(C_a)$ and $\Pi_m(C_w)$ by arbitrary bounded sets $\mathds D_a$ and $\mathds D_w$ (they can also be sets of $\RR^m$) in Proposition~\ref{pro}, then it obviously remain valid.
\end{rem}

\begin{rem}
  By the uniformity of estimate \eqref{est w} we mean the independence of the coefficient $\tl C_m$ of the parameters $a_i$ and $w^i$. At the same time the coefficient of $\xi$ in the argument of the exponential function depends on the parameters $a^i$. Thus, estimate \eqref{est w} is only semi-uniform
  in some sense. Of course we can apply \eqref{la} and replace $\bLm(M_m)$ by $1 + C_a$ in \eqref{est w}, that gives us completely uniform but, generally speaking, more rough estimate. However, such loss of accuracy is sometimes undesirable, especially if the coefficient of $\xi$ in the argument of
  the exponential function change its sign (from negative to positive)---this is the case that will be discussed below.
\end{rem}

Let $\mathds D_a$ be a closed bounded set and let it be known (e.g., due to Routh--Hurwitz criterion, see \cite{Gantmacher_2000_book}) that $\forall (i,M_m) \in  \ol{1,m} \ts \mathds D_a$ $\re\la^i(M_m) < 0$. Then $\forall M_m \in \mathds D_a$ $\bLm(M_m) < 0$, and since due to Lemma \ref{lem} the function $\bLm$ is continuous on the whole $\CC^m$ (and hence it is continuous on any set $\mathds D_a$ of this space),
it follows from Weierstrass's theorem on the maximum of a~continuous function
that $\exists M^*_m \in \mathds D_a$ $\forall M_m \in \mathds D_a$ $\bLm(M_m) \le \bLm(M^*_m) < 0$. This and Proposition~\ref{pro} imply the existence of $\tl C_m > 0$ such that for all $(i,\xi,M_m,N_m) \in \{ 0, \dots, m-1 \} \ts [0, +\infty) \ts \mathds D_a \ts \mathds D_w$ (here $\mathds D_w$ is an arbitrary bounded set of $\CC^m$) the solution $w(\cdot; M_m, N_m)$ of problem \eqref{de wn}--\eqref{ic wn} satisfies the inequality
\begin{gather}\label{est_w}
  \lt| w^{(i)}(\xi;M_m,N_m) \rt| \le \tl C_m\, (1 + \xi^{m-1})\, e^{- \varkappa\, \xi},
\end{gather}
where $\varkappa := - \bLm(M^*_m) > 0$.

Finally, we consider the family (with respect to parameters, which are listed below) of the Cauchy problems for linear differential equation of an arbitrary order $m \in \NN$ with coefficients depending on the parameters $t_1$, \dots, $t_k$ (moreover, the initial values of the solution $w(\cdot; M_k, N_m)$ and its derivatives still act as parameters)
\begin{gather}\notag
  w^{(m)}(\xi; M_k, N_m) = a_{m-1}(t_1, \dots, t_k)\, w^{(m-1)}(\xi; M_k, N_m) + \cdots +{}
\\\label{de_wnt}
  {}+ a_0(t_1, \dots, t_k)\, w(\xi; M_k, N_m) = 0, \quad \xi \in (0,+\infty);
\\\notag
  w(0;M_k,N_m) = w^0,\ \ldots,\ w^{(m-1)}(0;M_k,N_m) = w^{m-1},
\end{gather}
where $M_k = (t_1, \dots, t_k) \in \mathds D_t \subseteq \CC^k$, $N_m = (w^0, \dots, w^{m-1}) \in \CC^m$, $a_i: \mathds D_t \to \CC$.

\begin{pro}\label{sta_t}

Suppose $\mathds D_t$ is a closed bounded set of $\CC^k$, $\mathds D_w$ is a bounded set of $\CC^m$, $a: \mathds D_t \ni M_k \mapsto (a_0(M_k), \dots, a_{m-1}(M_k)) \in \CC^m$, the functions $a_0$, \dots, $a_{m-1}$ are continuous on $\mathds D_t$, $\forall M_k \in \mathds D_t$ $\bLm(a(M_k)) < 0$ \textup(here $\bLm$ is the function from Lemma~\ref{lem}\textup). Then there exists $\tl C_m \ge 0$ and $\vkp > 0$ such that
\begin{gather*}
  \lt| w^{(i)}(\xi;M_k,N_m) \rt| \le \tl C_m\, (1 + \xi^{m-1})\, e^{ - \vkp\, \xi}
\end{gather*}
for all $(i,\xi,M_k,N_m) \in \{ 0, \dots, m-1 \} \ts [0, +\infty) \ts \mathds D_t \ts \mathds D_w$, where $w(\cdot; M_k, N_m)$ is the solution to problem \eqref{de_wnt}.

\end{pro}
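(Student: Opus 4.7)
The plan is to reduce Proposition \ref{sta_t} to the semi-uniform estimate \eqref{est_w} already established in the paragraph preceding its statement. The key observation is that the solution to \eqref{de_wnt} is nothing other than the solution to the problem $\mathfrak K(a(M_k), N_m)$ of \eqref{de wn}--\eqref{ic wn} with coefficient tuple $a(M_k) \in \CC^m$; hence it suffices to apply \eqref{est_w} with the set of coefficients taken to be the image $a(\mathds D_t)$.

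First, I would set $\mathds D_a := a(\mathds D_t) \subseteq \CC^m$. Since the coordinate functions $a_0, \dots, a_{m-1}$ are continuous on the closed bounded (hence compact) set $\mathds D_t \subseteq \CC^k$, the map $a$ is continuous and its image $\mathds D_a$ is compact in $\CC^m$, in particular closed and bounded. The hypothesis $\bLm(a(M_k)) < 0$ for every $M_k \in \mathds D_t$ translates directly into $\bLm(M_m) < 0$ for every $M_m \in \mathds D_a$, which is exactly the condition under which estimate \eqref{est_w} was derived.

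Applying Lemma \ref{lem} together with Weierstrass's theorem to the continuous function $\bLm$ on the compact set $\mathds D_a$, I obtain a point $M^*_m \in \mathds D_a$ with $\bLm(M_m) \le \bLm(M^*_m) < 0$ for every $M_m \in \mathds D_a$, and I set $\vkp := -\bLm(M^*_m) > 0$. Then, repeating verbatim the derivation of \eqref{est_w} with $\mathds D_a$ in place of $\Pi_m(C_a)$ (this being precisely the content of the paragraph after Proposition~\ref{pro}, combined with the remark that $\Pi_m(C_a)$ may be replaced by any bounded set), I get a constant $\tl C_m \ge 0$ such that
\[
  \lt| w^{(i)}(\xi; M_m, N_m) \rt| \le \tl C_m\, (1 + \xi^{m-1})\, e^{-\vkp\, \xi}
\]
for all $(i, \xi, M_m, N_m) \in \{0, \dots, m-1\} \ts [0, +\infty) \ts \mathds D_a \ts \mathds D_w$.

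To finish, I would substitute $M_m = a(M_k)$ for an arbitrary $M_k \in \mathds D_t$; since $a(M_k) \in \mathds D_a$, the displayed bound becomes exactly the asserted estimate for the solution of \eqref{de_wnt}. The proof is a direct assembly of Lemma \ref{lem}, Proposition \ref{pro}, and standard compactness, so I do not anticipate any substantive obstacle; the only point deserving explicit mention is the compactness of $a(\mathds D_t)$, which is immediate from continuity of $a$ and compactness of $\mathds D_t$.
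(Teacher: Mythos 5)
Your proof is correct and follows essentially the same route as the paper's: both reduce the statement to estimate \eqref{est_w} on the image set $\mathds D_a := a(\mathds D_t)$, which is closed and bounded as the continuous image of a compact set, and take $\vkp$ to be the negative of the maximum of $\bLm$ over that image. You merely spell out the compactness and Weierstrass steps that the paper leaves implicit.
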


\begin{proof}
Since $a(\mathds D_t) =: \mathds D_a$ is a bounded closed set of $\CC^m$, to prove Proposition~\ref{sta_t} it suffices to note that
\[
  \max\limits_{M_k \in \mathds D_t} \bLm(a(M_k)) = \max\limits_{M_m \in \mathds D_a} \bLm(M_m),
\]
and then put $\vkp := - \max\limits_{M_k \in \mathds D_t} \bLm(a(M_k))$ and apply estimate \eqref{est_w}.
\end{proof}

\bibliography{Singul_Eng,Bukzhalev_Articles,Bukzhalev_Preprints}

\providecommand{\href}[2]{#2}\begingroup\raggedright\begin{thebibliography}{1}

\bibitem{Tikhonov_Vasil'eva_Sveshnikov_1985_book}
A.~N. Tikhonov, A.~B. Vasil'eva, and A.~G. Sveshnikov, {\em Differential
  Equations}.
\newblock Springer Series in Soviet Mathematics. Springer-Verlag, Berlin,
  Heidelberg, 1st~ed., 1985.

\bibitem{Bukzhalev_2017_CMMP}
E.~E. Bukzhalev, ``On one method for the analysis of the {C}auchy problem for a
  singularly perturbed inhomogeneous second-order linear differential
  equation,'' \href{http://dx.doi.org/10.1134/S0965542517100050}{{\em
  Computational Mathematics and Mathematical Physics} {\bfseries 57} no.~10,
  (Oct, 2017) 1635--1649}. \url{https://doi.org/10.1134/S0965542517100050}.

\bibitem{Bukzhalev_2017_MUCMC}
E.~E. Bukzhalev, ``The {C}auchy problem for singularly perturbed weakly
  nonlinear second-order differential equations: An iterative method,''
  \href{http://dx.doi.org/10.3103/S0278641917030037}{{\em Moscow University
  Computational Mathematics and Cybernetics} {\bfseries 41} no.~3, (Jul, 2017)
  113--121}. \url{https://doi.org/10.3103/S0278641917030037}.

\bibitem{2017arXiv171000640B}
E.~E. {Bukzhalev}, ``{On the Global Continuity of the Roots of Families of
  Monic Polynomials (in Russian)},'' {\em ArXiv e-prints} (Sept., 2017) ,
  \href{http://arxiv.org/abs/1710.00640}{{\ttfamily arXiv:1710.00640
  [math.CA]}}.

\bibitem{Ostrowski_1966_book}
A.~M. Ostrowski, {\em Solution of Equations and Systems of Equations}.
\newblock Pure and Applied Mathematics: A Series of Monographs and Textbooks,
  Vol. 9. Academic Press, New York and London, 2nd~ed., 1966.

\bibitem{Markushevich_2005_book}
A.~I. Markushevich, {\em Theory of Functions of a Complex Variable}.
\newblock AMS Chelsea Publishing Series, Vol. 296. AMS Chelsea Publishing,
  Providence, RI, 2nd~ed., 2005.

\bibitem{Gantmacher_2000_book}
F.~R. Gantmacher, {\em The Theory of Matrices, Vol. 2}.
\newblock AMS Chelsea Publishing Series, Vol. 133. AMS Chelsea Publishing,
  Providence, RI, 2000.

\end{thebibliography}\endgroup

\end{document}